\newtheorem{theorem}{Theorem}
\newtheorem{lemma}[theorem]{Lemma}
\newtheorem{corollary}[theorem]{Corollary}
\newtheorem*{remark}{Remark}
\title{Enhancing Neural Network Differential Equation Solvers}
\author{Matthew J. H. Wright}
\date{December 2022}
\begin{document}

\maketitle

\begin{abstract}

We motivate the use of neural networks for the construction of numerical solutions to differential equations. We prove that there exists a feed-forward neural network that can arbitrarily minimise an objective function that is zero at the solution of Poisson's equation, allowing us to guarantee that neural network solution estimates can get arbitrarily close to the exact solutions. We also show how these estimates can be appreciably enhanced through various strategies, in particular through the construction of error correction networks, for which we propose a general method. We conclude by providing numerical experiments that attest to the validity of all such strategies for variants of Poisson's equation. The source code for this project can be found at \url{https://github.com/mjhwright/error-correction}.

\end{abstract}

\section{Introduction}

Differential equations are among the most ubiquitous problems in contemporary mathematics. In recent years, developments in artificial neural networks have prompted new research into their capacity to be excellent differential equation solvers \cite{NNDE_Pros, PINNS1, PINNS2, Sirignano_2018, Quanta}. They are universal approximators \cite{Hornik1991}; they can circumvent the curse of dimensionality \cite{Barron}; and they are continuous. However, practically, their construction and optimisation costs are enough to deter the discerning user.

In this paper, we explain a method by which neural networks can numerically solve differential equations. We further this by providing three strategies that can be targeted to improve the efficacy of the solver. The first two -- sinusoidal representation networks \cite{SIREN} and random Fourier features \cite{FF} -- are well-established in the field of artificial neural networks and machine learning. The third is a novel technique called error correction \cite{AKSH1, AKSHetal, AKSH2, AKSH3, AKSH4}. We explain how error correction can be implemented recursively, with little modification to the original solver, to give enhanced numerical solutions to differential equations, and we present results that demonstrate this. 

This paper is designed to give a flavour of the competence of artificial neural networks in this field, while also highlighting their certain limitations.

\section{Background}

Throughout this paper, we consider differential equations with solution $\phi:\mathbb{R}^d\to\mathbb{R}$. Consequently, our neural network approximation is a function $\mathcal{N}:\mathbb{R}^d\to\mathbb{R}$.

\subsection{Universal approximation theorems}

The realisation of neural networks' capabilities to learn seemingly any function has brought about numerous universal approximation theorems. These state that, under certain conditions, neural networks are able to approximate any function to arbitrary closeness. We recall one of these theorems by Hornik \cite{Hornik1991}.

First, define the set of all functions represented by a neural network with a single hidden layer of width $n$ and identity activation on the output layer as

\begin{equation*}
    \mathscr{A}^n(\sigma) = \left\{\mathcal{N}:\mathbb{R}^d\to\mathbb{R}, \mathcal{N}(\mathbf{x})=\mathbf{W}^{(1)}\left(\sigma\left(\mathbf{W}^{(0)}\mathbf{x}+\mathbf{b}^{(0)}\right)\right)+b^{(1)}\right\}
\end{equation*}

where $\mathbf{x}\in\mathbb{R}^d, \mathbf{W}^{(0)}\in\mathbb{R}^{n\times d}, \mathbf{W}^{(1)}\in\mathbb{R}^{1\times n}, \mathbf{b}^{(0)}\in\mathbb{R}^n, b^{(1)}\in\mathbb{R},$ and $\sigma:\mathbb{R}\to\mathbb{R}$ is applied element-wise. Then, 

\begin{equation} \label{Fspace}
    \mathscr{A}(\sigma)=\bigcup_{n=1}^\infty\mathscr{A}^n(\sigma)
\end{equation}

is the set of all such functions with any number of neurons. Define also $\mathcal{C}^m(\mathbb{R}^d)$ as the space of all functions that, together with their partial derivatives of order $|\alpha|\leq m$, are continuous on $\mathbb{R}^d$.

\begin{theorem} \label{Hornik}
     \textup{\cite{Hornik1991}} If $\sigma\in\mathcal{C}^m(\mathbb{R}^d)$ is nonconstant and bounded, then $\mathscr{A}(\sigma)$ is uniformly $m$-dense on all compact sets of $\mathcal{C}^m(\mathbb{R}^d)$, i.e. for all $\phi\in\mathcal{C}^m(\mathbb{R}^d)$, for all compact sets $\Omega\subset\mathbb{R}^d$ and for all $\epsilon>0$, there exists $\mathcal{N}\in\mathscr{A}(\sigma)$ such that
    
    \begin{equation*}
        \max_{|\alpha|\leq m}\sup_{x\in\Omega} |\partial_x^{(\alpha)}\mathcal{N}(x) - \partial_x^{(\alpha)}\phi(x)|<\epsilon    
    \end{equation*}
    
\end{theorem}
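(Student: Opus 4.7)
The plan is to first establish the result in the one-dimensional case ($d=1$) and then lift it to arbitrary $d$ via ridge-function arguments. Since $\sigma$ is nonconstant and $\mathcal{C}^m$, there must exist some integer $k\le m$ and some $b_0\in\mathbb{R}$ with $\sigma^{(k)}(b_0)\neq 0$. I would exploit this by using finite differences in a free parameter: for each $t$, the quantity
\[
    h^{-k}\sum_{j=0}^{k}(-1)^{k-j}\binom{k}{j}\sigma(jht + b_0)
\]
lies in $\mathscr{A}^{k+1}(\sigma)$ and converges as $h\to 0$ to $t^{k}\sigma^{(k)}(b_0)$, uniformly on compact $t$-intervals. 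Rescaling produces uniform approximations to every monomial $t^k$, and hence by linearity to every univariate polynomial.

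The same finite-difference trick, applied to $\sigma(\lambda\mathbf{w}\cdot\mathbf{x}+b_0)$ with the step taken in $\lambda$, yields uniform approximations to ridge monomials $(\mathbf{w}\cdot\mathbf{x})^k$. As $\mathbf{w}$ ranges over $\mathbb{R}^d$, linear combinations of such ridge monomials of fixed degree $k$ exhaust the space of homogeneous degree-$k$ polynomials on $\mathbb{R}^d$ (a standard polarisation argument). Combined with Stone--Weierstrass, this gives $\mathcal{C}^0(\Omega)$-density of $\mathscr{A}(\sigma)$ for any compact $\Omega$.

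To upgrade to $m$-density, I would observe that the approximations above differentiate cleanly: $\partial_{\mathbf{x}}^{\alpha}\sigma(\mathbf{w}\cdot\mathbf{x}+b) = \mathbf{w}^{\alpha}\sigma^{(|\alpha|)}(\mathbf{w}\cdot\mathbf{x}+b)$, which is again a ridge function of the same form. So the finite-difference construction, when differentiated in $\mathbf{x}$, produces approximations to the corresponding derivative of the target monomial. I would then invoke the classical fact that polynomials are $m$-dense in $\mathcal{C}^m(\Omega)$ for compact $\Omega$ (via mollification and truncated Taylor expansion) to reduce the approximation of an arbitrary $\phi\in\mathcal{C}^m(\mathbb{R}^d)$ to the approximation of a polynomial, which the above construction handles in the $m$-norm.

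The main obstacle is the joint control over all derivatives. Stone--Weierstrass alone only delivers uniform convergence, and the finite-difference error must be shown to vanish simultaneously for every $|\alpha|\le m$. This requires careful bookkeeping coordinating the step size $h$, the network width $n$, and the multi-index $\alpha$: higher-order spatial derivatives pull down factors of $\mathbf{w}^{\alpha}$ and also amplify the error of the finite-difference approximation of $\sigma^{(k)}(b_0)$, so $h$ must be chosen small enough to tame the \emph{worst} derivative rather than just $\phi$ itself. Managing this trade-off cleanly — while keeping the approximating function inside $\mathscr{A}(\sigma)$ — is where the real work lies.
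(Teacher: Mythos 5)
First, note that the paper does not prove this theorem at all: it is stated with a citation to Hornik (1991) and used as a black box, so there is no in-paper argument to compare yours against; what follows assesses your sketch on its own terms. Your strategy --- finite differences in the dilation parameter to manufacture monomials $t^k$ from $\sigma$, polarisation to pass from ridge monomials to homogeneous polynomials, then polynomial density --- is a legitimate and well-known route (essentially the Leshno--Lin--Pinkus--Schocken style argument), but as written it has a genuine gap. With $\sigma$ only in $\mathcal{C}^m$, the difference quotient $h^{-k}\Delta_h^k$ converges to $t^k\sigma^{(k)}(b_0)$ only for $k\le m$; for $k>m$ the derivative $\sigma^{(k)}$ need not exist and the construction says nothing. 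Hence you obtain only polynomials of degree at most $m$, which are not dense in $\mathcal{C}^0(\Omega)$, and the appeal to Stone--Weierstrass collapses. (For each $k\le m$ a point $b_k$ with $\sigma^{(k)}(b_k)\neq 0$ does exist --- otherwise $\sigma$ would be a bounded nonconstant polynomial --- but you need \emph{all} degrees.) The standard repair is to first replace $\sigma$ by a mollification $\sigma*\varphi_\varepsilon$: this is $\mathcal{C}^\infty$, bounded, nonconstant for small $\varepsilon$, hence non-polynomial, so every derivative order is available, and each $(\sigma*\varphi_\varepsilon)(wt+b)$ is a uniform limit (together with its $t$-derivatives) of Riemann sums lying in $\mathscr{A}(\sigma)$.

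The second gap is the one you flag yourself but do not close: control of the finite-difference error in the $\mathcal{C}^m$ norm rather than the sup norm. Differentiating your approximant $\alpha$ times in $\mathbf{x}$ turns the relevant one-dimensional function into $\lambda\mapsto\lambda^{|\alpha|}\sigma^{(|\alpha|)}(\lambda\,\mathbf{w}\cdot\mathbf{x}+b_0)$, and for its $k$-th difference quotient in $\lambda$ to converge you need that function to be $\mathcal{C}^k$ in $\lambda$, i.e.\ $\sigma\in\mathcal{C}^{|\alpha|+k}$ --- up to $\mathcal{C}^{2m}$, which the hypothesis does not supply. Acknowledging that this is ``where the real work lies'' is not the same as doing the work; mollifying $\sigma$ first resolves this issue as well, which is why the published proofs take that route (or, as Hornik himself does, argue by duality and Fourier-analytic methods rather than through polynomials). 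As it stands, the proposal is a plausible outline of a known strategy with its two load-bearing steps missing.
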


Theorem \ref{Hornik} illustrates the universal approximation quality for single-layer networks of arbitrary width. Applying results from an earlier paper by Hornik et al. \cite{Hornik1989}, this can be extended to multilayer networks. Crucially, these theorems tell us that neural networks are dense on certain function spaces, but they do not tell us how to train a network to realise this.

\subsection{Neural network differential equation solvers}

Using neural networks to solve differential equations was introduced in the late 1990s \cite{NNDE_Pros}, but experienced a modern resurgence through the publication of two papers \cite{PINNS1, PINNS2} on physics-informed neural networks. The deep Galerkin method \cite{Sirignano_2018} which we describe below is very similar to the method described in \cite{PINNS1} only, instead of using experimental data, we train a network on points randomly sampled across the domain of the differential equation.

Consider Poisson's equation with Dirichlet boundary conditions:

\begin{equation} \label{Poisson}
    \begin{cases}
        \nabla^2\phi & = f \text{ in $\Omega$} \\
        \phi & = g \text{ on $\partial\Omega$}
    \end{cases}
\end{equation}

\begin{lemma}
    Let $\Omega\subset\mathbb{R}^d$ be a smooth, compact domain. Then there exists at most one solution $\phi$ to (\ref{Poisson}).  
\end{lemma}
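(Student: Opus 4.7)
The plan is to follow the classical energy argument. Suppose $\phi_1,\phi_2$ both solve (\ref{Poisson}), and consider the difference $u = \phi_1 - \phi_2$. By linearity of the Laplacian, $u$ satisfies the homogeneous problem $\nabla^2 u = 0$ in $\Omega$ with $u = 0$ on $\partial \Omega$. Uniqueness of $\phi$ then reduces to showing that the only solution to this homogeneous problem is $u \equiv 0$.

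To establish this, I would invoke Green's first identity: for sufficiently regular $u$ on a smooth, compact $\Omega$,
\begin{equation*}
    \int_\Omega u \, \nabla^2 u \, dV = \int_{\partial \Omega} u \, \frac{\partial u}{\partial n} \, dS - \int_\Omega |\nabla u|^2 \, dV.
\end{equation*}
The left-hand side vanishes because $\nabla^2 u = 0$, and the boundary integral vanishes because $u = 0$ on $\partial \Omega$. This forces $\int_\Omega |\nabla u|^2 \, dV = 0$, and since the integrand is non-negative and continuous (assuming enough regularity on $u$), we conclude $\nabla u \equiv 0$ in $\Omega$. Hence $u$ is constant on each connected component of $\Omega$, and the boundary condition $u = 0$ pins that constant to zero, giving $\phi_1 = \phi_2$.

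The main technical concern is regularity: Green's identity in the form above requires $u \in \mathcal{C}^2(\Omega) \cap \mathcal{C}^1(\overline{\Omega})$, or at least enough smoothness for the divergence theorem to apply on $\Omega$. Since the lemma assumes $\Omega$ is smooth and compact, and we are implicitly working with classical solutions $\phi \in \mathcal{C}^2(\Omega) \cap \mathcal{C}(\overline{\Omega})$, this is either directly available or can be justified by a standard approximation argument on a slightly interior smooth subdomain exhausting $\Omega$. An equivalent route, avoiding any boundary regularity issue, is to appeal to the strong maximum principle: a harmonic function on a bounded domain attains its maximum and minimum on the boundary, and with $u = 0$ on $\partial \Omega$ this forces $u \equiv 0$. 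I would present the energy method as the primary argument, since it is short and self-contained, and mention the maximum-principle alternative as a remark if space allows.
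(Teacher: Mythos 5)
Your proof is correct and follows essentially the same route as the paper: both take the difference of two solutions, apply Green's first identity to the resulting harmonic function with zero boundary data, and conclude $\nabla u \equiv 0$ and hence $u \equiv 0$. Your version is in fact slightly more careful than the paper's, which jumps from $\nabla\omega = 0$ directly to $\omega = 0$ without noting that the boundary condition is what pins the constant to zero.
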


\begin{proof}
    Suppose $\phi$ and $\varphi$ both satisfy the conditions of (\ref{Poisson}) and let $\omega=\phi - \varphi$. Then $\omega$ is harmonic in $\Omega$ and zero on $\partial\Omega$. Then,
    
    \begin{equation*}
        \int_\Omega \omega(\mathbf{x})\nabla^2\omega(\mathbf{x}) \,d\mathbf{x} = \int_{\partial\Omega} \omega(\mathbf{x})\delta_{\mathbf{n}}\omega(\mathbf{x}) \,d\mathbf{x} - \int_\Omega ||\nabla\omega(\mathbf{x})||^2 \,d\mathbf{x} = - \int_\Omega ||\nabla\omega(\mathbf{x})||^2 \,d\mathbf{x} = 0
    \end{equation*}
    
    and $\nabla\omega = 0$. Thus, $\omega=0$ and $\phi = \varphi$.
\end{proof}

We now seek an approximation $\mathcal{N}$ to $\phi$. Define the objective function

\begin{equation*}
    \mathcal{J}(\mathcal{N}) = \int_\Omega |\nabla^2\mathcal{N}(\mathbf{x}) - f(\mathbf{x})|^2 \nu_1(\mathbf{x}) \,d\mathbf{x} + \int_{\partial\Omega} |\mathcal{N}(\mathbf{x}) - g(\mathbf{x})|^2 \nu_2(\mathbf{x}) \,d\mathbf{x}
\end{equation*}

for probability distributions $\nu_1$ on $\Omega$ and $\nu_2$ on $\partial\Omega$. By uniqueness of $\phi$, $\mathcal{J}(\mathcal{N})=0\implies\mathcal{N}=\phi$. However, minimising the objective function directly is impractical. First, we transform the problem into a machine learning framework. Our approximation $\mathcal{N}=\mathcal{N}(\cdot;\theta)$ becomes a neural network with parameters $\theta$.

\subsubsection*{Deep Galerkin method}

We demonstrate the algorithm for the deep Galerkin method \cite{Sirignano_2018} when applied to Poisson's equation (\ref{Poisson}):

\begin{enumerate}
    \item Randomly sample points $\{\mathbf{x}_i\}_{i=1}^M$ from $\Omega$ and $\{\mathbf{y}_j\}_{j=1}^N$ from $\partial\Omega$ according to respective probability distributions $\nu_1$ and $\nu_2$, and propagate them through a feed-forward neural network $\mathcal{N}(\cdot;\theta)$.
    \item Calculate the loss:
    \[\mathcal{L}(\theta)=\frac{1}{M}\sum_{i=1}^M \left(\nabla^2\mathcal{N}(\mathbf{x}_i;\theta) - f(\mathbf{x}_i)\right)^2 + \frac{1}{N}\sum_{j=1}^N \left(\mathcal{N}(\mathbf{y}_j;\theta) - g(\mathbf{y}_j)\right)^2\]
    \item Update parameters $\theta_{t+1}=\theta_t - \eta\nabla_\theta\mathcal{L}(\theta_t)$ with learning rate $\eta>0$ and $t\in\mathbb{N}_0$.
    \item Repeat until $\nabla_\theta\mathcal{L}(\theta_t)\approx0$.
\end{enumerate}

This is a minibatch gradient descent implementation, where $M$ and $N$ are the size of the minibatches and $M>N$.

\begin{lemma}
    $\mathbb{E}[\nabla_\theta\mathcal{L}(\theta_t)|\theta_t] = \nabla_\theta\mathcal{J}(\mathcal{N}(\cdot;\theta_t))$    
\end{lemma}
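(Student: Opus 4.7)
The plan is to unpack what $\mathbb{E}[\,\cdot\,|\theta_t]$ means here: conditional on the current parameters $\theta_t$, the only randomness in $\mathcal{L}(\theta_t)$ comes from the i.i.d.\ minibatch samples $\{\mathbf{x}_i\}\sim\nu_1$ and $\{\mathbf{y}_j\}\sim\nu_2$. So the proof is essentially the standard argument that the DGM loss is an unbiased Monte Carlo estimator of $\mathcal{J}$, combined with an exchange of gradient and expectation.

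First I would fix $\theta=\theta_t$ and compute $\mathbb{E}[\mathcal{L}(\theta)\mid\theta]$ directly. By linearity of expectation, the interior sum contributes
\begin{equation*}
\frac{1}{M}\sum_{i=1}^{M}\mathbb{E}_{\mathbf{x}_i\sim\nu_1}\!\left[\bigl(\nabla^2\mathcal{N}(\mathbf{x}_i;\theta)-f(\mathbf{x}_i)\bigr)^2\right] = \int_\Omega \bigl|\nabla^2\mathcal{N}(\mathbf{x};\theta)-f(\mathbf{x})\bigr|^2 \nu_1(\mathbf{x})\,d\mathbf{x},
\end{equation*}
since all $M$ summands are identically distributed. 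The boundary sum gives the analogous $\nu_2$-integral over $\partial\Omega$ by the same reasoning, so $\mathbb{E}[\mathcal{L}(\theta)\mid\theta]=\mathcal{J}(\mathcal{N}(\cdot;\theta))$.

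Next I would take $\nabla_\theta$ of both sides. The left-hand side is $\nabla_\theta\mathbb{E}[\mathcal{L}(\theta)\mid\theta]$, and what we actually want is $\mathbb{E}[\nabla_\theta\mathcal{L}(\theta)\mid\theta]$. Swapping these is the main obstacle: it requires justifying a differentiation-under-the-integral (Leibniz rule) step, since the expectation is really a finite sum of integrals against $\nu_1$ and $\nu_2$. I would invoke dominated convergence, noting that because $\sigma$ is smooth (as in Theorem~\ref{Hornik}), the integrand $(\nabla^2\mathcal{N}(\mathbf{x};\theta)-f(\mathbf{x}))^2$ and its gradient in $\theta$ are continuous in $(\mathbf{x},\theta)$ on the compact domain $\Omega$, hence uniformly bounded on a neighbourhood of $\theta_t$ by an integrable (in fact, constant) dominating function. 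The same applies to the boundary term on the compact $\partial\Omega$. This legitimises swapping $\nabla_\theta$ with each integral.

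Putting the pieces together yields
\begin{equation*}
\mathbb{E}[\nabla_\theta\mathcal{L}(\theta_t)\mid\theta_t] = \nabla_\theta\mathbb{E}[\mathcal{L}(\theta_t)\mid\theta_t] = \nabla_\theta\mathcal{J}(\mathcal{N}(\cdot;\theta_t)),
\end{equation*}
as required. The only non-routine point is the Leibniz-rule interchange; everything else is the i.i.d.\ sampling identity. I would state the required regularity (compactness of $\Omega$ and $\partial\Omega$, smoothness of $\sigma$ and $f,g$) explicitly so that the dominating function exists without further comment.
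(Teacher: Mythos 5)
Your proposal is correct and follows essentially the same route as the paper: recognise $\mathcal{L}$ as an unbiased Monte Carlo estimator of $\mathcal{J}$ via the i.i.d.\ sampling of $\{\mathbf{x}_i\}\sim\nu_1$ and $\{\mathbf{y}_j\}\sim\nu_2$, and interchange $\nabla_\theta$ with the expectation. The only difference is cosmetic --- the paper swaps the gradient and expectation first and then collapses the sums into integrals, whereas you compute $\mathbb{E}[\mathcal{L}\mid\theta_t]=\mathcal{J}$ first and then differentiate; you are also somewhat more explicit about the dominated-convergence justification, which the paper simply assumes.
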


\begin{proof}
    Assume $\mathcal{L}$ sufficiently smooth and bounded to interchange derivatives and integrals. Then,
    \begin{align*}
        \mathbb{E}[\nabla_\theta\mathcal{L}(\theta_t)|\theta_t] &= \nabla_\theta\left[\frac{1}{M}\sum_{i=1}^M \mathbb{E}\left[(\nabla^2\mathcal{N}(\mathbf{x}_i;\theta_t) - f(\mathbf{x}_i))^2\right] + \frac{1}{N}\sum_{j=1}^N \mathbb{E}\left[(\mathcal{N}(\mathbf{y}_j;\theta_t) - g(\mathbf{y}_j))^2\right]\right] \\
        &= \nabla_\theta\left[\frac{1}{M}\sum_{i=1}^M \int_\Omega (\nabla^2\mathcal{N}(\mathbf{x};\theta_t) - f(\mathbf{x}))^2 \nu_1(\mathbf{x}) \,d\mathbf{x} + \frac{1}{N}\sum_{j=1}^N \int_{\partial\Omega} (\mathcal{N}(\mathbf{y};\theta_t) - g(\mathbf{y}))^2 \nu_2(\mathbf{y}) \,d\mathbf{y}\right] \\
        &= \nabla_\theta\left[\int_\Omega (\nabla^2\mathcal{N}(\mathbf{x};\theta_t) - f(\mathbf{x}))^2 \nu_1(\mathbf{x}) \,d\mathbf{x} + \int_{\partial\Omega} (\mathcal{N}(\mathbf{y};\theta_t) - g(\mathbf{y}))^2 \nu_2(\mathbf{y}) \,d\mathbf{y}\right] \\
        &= \nabla_\theta\mathcal{J}(\mathcal{N}(\cdot;\theta_t))
    \end{align*}
\end{proof}

Therefore, the $\nabla_\theta\mathcal{L}(\theta_t)$ are unbiased estimates of $\nabla_\theta\mathcal{J}(\mathcal{N}(\cdot;\theta_t))$, and we can assume a step in the descent direction of $\mathcal{L}$ is also one in $\mathcal{J}$. Thus, any minimisation of $\mathcal{L}$ should translate to a local minimisation of $\mathcal{J}$.

\subsubsection*{Minimisation of $\mathcal{J}(\mathcal{N})$}

We prove the following theorem, adapted from the original deep Galerkin method paper \cite{Sirignano_2018}.

\begin{theorem} \label{bigboy}
    Let $\mathscr{A}(\sigma)$ be given by (\ref{Fspace}), for nonconstant, bounded $\sigma$, and let $\Omega\in\mathbb{R}^d$ be a compact domain and consider measures $\nu_1, \nu_2$ whose supports are contained in $\Omega,\partial\Omega$ respectively. Assume further that $\nabla^2\phi$ is locally Lipschitz with Lipschitz constant that can have at most polynomial growth on $\nabla\phi$, uniformly with respect to $x$, i.e.
    
    \begin{equation} \label{Lippy}
        |\nabla^2\mathcal{N}-\nabla^2\phi|\leq\left(||\nabla\mathcal{N}||^{\frac{a}{2}}+||\nabla\phi||^{\frac{b}{2}}\right)||\nabla\mathcal{N}-\nabla\phi||
    \end{equation}
    
    for some constants $0\leq a,b<\infty$. Then, for all $\epsilon>0$, there exists a constant $\kappa>0$ such that there exists a function $\mathcal{N}\in\mathscr{A}(\sigma)$ with
    
    \begin{equation*}
        \mathcal{J}(\mathcal{N})\leq\kappa\epsilon
    \end{equation*}
    
\end{theorem}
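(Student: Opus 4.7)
The plan is to invoke Hornik's theorem to produce a network that is uniformly close to $\phi$ in $\mathcal{C}^1$ on $\overline{\Omega}$, and then translate this closeness into smallness of $\mathcal{J}(\mathcal{N})$ using hypothesis (\ref{Lippy}) together with the fact that $\phi$ solves (\ref{Poisson}), so that $f = \nabla^2\phi$ on $\Omega$ and $g = \phi$ on $\partial\Omega$. Substituting these identities, the objective rewrites as a pure approximation error:
\[\mathcal{J}(\mathcal{N}) = \int_\Omega |\nabla^2\mathcal{N}(\mathbf{x}) - \nabla^2\phi(\mathbf{x})|^2 \nu_1(\mathbf{x}) \,d\mathbf{x} + \int_{\partial\Omega} |\mathcal{N}(\mathbf{x}) - \phi(\mathbf{x})|^2 \nu_2(\mathbf{x}) \,d\mathbf{x},\]
so the task reduces to making each term of order $\delta^2$ for an arbitrarily small tolerance $\delta$ to be chosen in terms of $\epsilon$.

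Fix $\delta \in (0,1]$ and apply Theorem \ref{Hornik} with $m=1$ on the compact set $\overline\Omega$ to obtain $\mathcal{N}\in\mathscr{A}(\sigma)$ with
\[\sup_{\mathbf{x}\in\overline\Omega} |\mathcal{N}(\mathbf{x})-\phi(\mathbf{x})| < \delta \quad\text{and}\quad \sup_{\mathbf{x}\in\overline\Omega} ||\nabla\mathcal{N}(\mathbf{x})-\nabla\phi(\mathbf{x})|| < \sqrt{d}\,\delta.\]
The boundary integral is then immediately bounded by $\delta^2$, since $\partial\Omega$ is compact (as a closed subset of the compact $\overline\Omega$) and $\nu_2$ is a probability measure. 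For the interior integral, $\phi\in\mathcal{C}^2(\overline\Omega)$ implies that $||\nabla\phi||$ is uniformly bounded on $\overline\Omega$ by some $M$, and the triangle inequality then gives $||\nabla\mathcal{N}|| \leq M + \sqrt{d}$. Substituting these into (\ref{Lippy}) yields
\[|\nabla^2\mathcal{N}-\nabla^2\phi| \leq \bigl((M+\sqrt{d})^{a/2}+M^{b/2}\bigr)\sqrt{d}\,\delta =: K\delta,\]
so that $\int_\Omega|\nabla^2\mathcal{N}-\nabla^2\phi|^2\nu_1\,d\mathbf{x} \leq K^2\delta^2$. Summing the two estimates gives $\mathcal{J}(\mathcal{N}) \leq (K^2+1)\delta^2$, and choosing $\delta = \min(1,\sqrt{\epsilon})$ with $\kappa = K^2+1$ delivers the conclusion $\mathcal{J}(\mathcal{N}) \leq \kappa\epsilon$.

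The main obstacle is recognising that hypothesis (\ref{Lippy}) is precisely the tool needed to upgrade the $\mathcal{C}^1$ closeness granted by Hornik into control of the second-derivative term appearing in $\mathcal{J}$, and that its polynomial prefactor stays uniformly bounded thanks to the compactness of $\overline\Omega$ and the $\mathcal{C}^2$ regularity of $\phi$. Two small technicalities deserve flagging: Theorem \ref{Hornik} implicitly requires $\sigma\in\mathcal{C}^1(\mathbb{R}^d)$, which should be treated as an additional hypothesis on $\sigma$; and the wording ``compact domain'' is interpreted as the closure of a bounded open set, so that $\partial\Omega\subset\overline\Omega$ lies in the compact set to which Hornik's uniform bound applies.
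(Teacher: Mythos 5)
Your proof is correct and follows the same overall strategy as the paper's: rewrite $\mathcal{J}$ as a pure approximation error using $f=\nabla^2\phi$ on $\Omega$ and $g=\phi$ on $\partial\Omega$, invoke Theorem \ref{Hornik} to obtain first-order closeness of $\mathcal{N}$ to $\phi$, and use hypothesis (\ref{Lippy}) to upgrade that into control of the Laplacian difference. Where you diverge is in the execution of the interior estimate. The paper squares (\ref{Lippy}), applies Young's inequality to obtain $|\nabla^2\mathcal{N}-\nabla^2\phi|^2\leq 2\left(||\nabla\mathcal{N}||^a+||\nabla\phi||^b\right)||\nabla\mathcal{N}-\nabla\phi||^2$, and then applies H\"older's inequality to the integral of the product before inserting the uniform bounds from Theorem \ref{Hornik}, arriving at $K(\epsilon^a+\sup_{\mathbf{x}\in\Omega}||\nabla\phi(\mathbf{x})||^{\max\{a,b\}})\epsilon^2$. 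You instead bound the integrand pointwise in sup norm and use that $\nu_1$ is a probability measure. Since Hornik's theorem delivers uniform estimates in the first place, your route is more elementary: it dispenses with both Young and H\"older without losing anything, and both arguments end with a bound of order $\delta^2$ (respectively $\epsilon^2$) that is downgraded to $\kappa\epsilon$ by a rescaling, with $\kappa$ depending on $\sup_{\mathbf{x}\in\Omega}||\nabla\phi(\mathbf{x})||$ in both cases. The two technicalities you flag are genuine and are left implicit in the paper as well: Theorem \ref{Hornik} with $m=1$ requires differentiable $\sigma$ (and, strictly, $\sigma\in\mathcal{C}^2$ is needed for $\nabla^2\mathcal{N}$ to exist classically, though (\ref{Lippy}) already presupposes this), and $\phi$ must admit a $\mathcal{C}^1$ extension to a neighbourhood of $\overline\Omega$ for Hornik's theorem to apply on the compact set containing $\partial\Omega$.
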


\begin{proof}
    The condition given by (\ref{Lippy}) implies that
    
    \begin{align*}
        |\nabla^2\mathcal{N}-\nabla^2\phi|^2 &\leq \left(||\nabla\mathcal{N}||^{\frac{a}{2}}+||\nabla\phi||^{\frac{b}{2}}\right)^2||\nabla\mathcal{N}-\nabla\phi||^2 \\
        &\leq \left(||\nabla\mathcal{N}||^a+||\nabla\phi||^b+2||\nabla\mathcal{N}||^{\frac{a}{2}}||\nabla\phi||^{\frac{b}{2}}\right)||\nabla\mathcal{N}-\nabla\phi||^2 \\
        &\leq 2\left(||\nabla\mathcal{N}||^a+||\nabla\phi||^b\right)||\nabla\mathcal{N}-\nabla\phi||^2
    \end{align*}
    
    with the last line following from Young's inequality \cite{inequalities}. Then,
    
    \begin{align*}
        \int_\Omega |\nabla^2\mathcal{N}(\mathbf{x}) - \nabla^2\phi(\mathbf{x})|^2 \,d\nu_1(\mathbf{x}) &\leq 2\int_\Omega \left(||\nabla\mathcal{N}(\mathbf{x})||^a+||\nabla\phi(\mathbf{x})||^b\right)||\nabla\mathcal{N}(\mathbf{x})-\nabla\phi(\mathbf{x})||^2 \,d\nu_1(\mathbf{x}) \\
        &\leq 2\left[\int_\Omega \left(||\nabla\mathcal{N}(\mathbf{x})||^a+||\nabla\phi(\mathbf{x})||^b\right)^p\,d\nu_1(\mathbf{x})\right]^{\frac{1}{p}}\left[\int_\Omega||\nabla\mathcal{N}(\mathbf{x})-\nabla\phi(\mathbf{x})||^{2q}\,d\nu_1(\mathbf{x})\right]^{\frac{1}{q}}
    \end{align*}
    
    if we apply H\"{o}lder's inequality \cite{inequalities} for exponents $p,q$ satisfying $\frac{1}{p}+\frac{1}{q}=1$ and $1\leq p,q\leq\infty$. Furthermore, 
    
    \begin{align*}
        \int_\Omega |\nabla^2\mathcal{N}(\mathbf{x}) - \nabla^2\phi(\mathbf{x})|^2 \,d\nu_1(\mathbf{x}) &\leq K\left[\int_\Omega \left(||\nabla\mathcal{N}(\mathbf{x})-\nabla\phi(\mathbf{x})||^a+||\nabla\phi(\mathbf{x})||^{\max\{a,b\}}\right)^p\,d\nu_1(\mathbf{x})\right]^{\frac{1}{p}} \\ &\quad \cdot\left[\int_\Omega||\nabla\mathcal{N}(\mathbf{x})-\nabla\phi(\mathbf{x})||^{2q}\,d\nu_1(\mathbf{x})\right]^{\frac{1}{q}} \\
        &\leq K(\epsilon^a + \sup_{\mathbf{x}\in\Omega} ||\nabla\phi(\mathbf{x})||^{\max\{a,b\}})\epsilon^2 
    \end{align*}
    
    for some constant $K$. The last line follows from Theorem \ref{Hornik}. Applying this result and Theorem \ref{Hornik} again to the objective function $\mathcal{J}$, we obtain:
    
    \begin{align*}
        \mathcal{J}(\mathcal{N}) &= \int_\Omega |\nabla^2\mathcal{N}(\mathbf{x}) - f(\mathbf{x})|^2 \,d\nu_1(\mathbf{x}) + \int_{\partial\Omega} |\mathcal{N}(\mathbf{x}) - g(\mathbf{x})|^2 \,d \nu_2(\mathbf{x}) \\
        &= \int_\Omega |\nabla^2\mathcal{N}(\mathbf{x}) - \nabla^2\phi(\mathbf{x})|^2 \,d\nu_1(\mathbf{x}) + \int_{\partial\Omega} |\mathcal{N}(\mathbf{x}) - \phi(\mathbf{x})|^2 \,d \nu_2(\mathbf{x}) \\
        &\leq K(\epsilon^a + \sup_{\mathbf{x}\in\Omega} ||\nabla\phi(\mathbf{x})||^{\max\{a,b\}})\epsilon^2 + \epsilon^2
    \end{align*}
    
    Finally, a rescaling of $\epsilon>0$ yields
    
    \begin{equation*}
        \mathcal{J}(\mathcal{N})\leq\kappa\epsilon
    \end{equation*}
    
    for some constant $\kappa>0$ which may depend on $\sup\limits_{\mathbf{x}\in\Omega} ||\nabla\phi(\mathbf{x})||$.
   
\end{proof}

Theorem \ref{bigboy} guarantees the existence of a feed-forward neural network $\mathcal{N}$ that, under relatively relaxed conditions, makes the objective function $\mathcal{J}(\mathcal{N})$ for Poisson's equation arbitrarily small. However, neural network objective functions are highly non-convex. This means they have numerous minima and, while gradient descent algorithms like the deep Galerkin method are extremely effective at reaching said minima \cite{GradDescent}, there is no guarantee of achieving the global minimum i.e., in our case, finding the unique solution. Many authors research such ideas in non-convex optimisation \cite{DasBlog}, but we do not touch on them here, and present only empirical evidence of our solver finding/not finding global minima in the Results section (see \textbf{\ref{Results}}).

\section{Methods}

We now present three highly accessible methods to enhance the performance of a neural network trained to solve differential equations via the deep Galerkin method.

\subsection{Sinusoidal representation networks}

Consider a neural network that is trained to approximate a function directly. We need only the first-order derivatives of the activation functions to backpropagate, and thus ReLU seems a natural choice \cite{ReLU}. However, our framework requires a network to learn a function via its derivatives. ReLU networks cannot do this without significant loss of information since they have second derivative zero. They are incapable of accurately modelling a signal's higher-order derivatives.

A recent paper \cite{SIREN} highlighting these limitations proposes something the authors call a sinusoidal representation network or SIREN. This is a neural network that implicitly defines a function, in our case $\mathcal{N}$, with sinusoidal activations. Thus, while regular feed-forward networks with, say, ReLU activation may be excellent function approximators, a SIREN can further accurately fit derivatives of functions $\phi$ through its own derivatives. ReLU networks typically cannot, due to their piecewise linear nature. This idea is hidden in Theorem \ref{Hornik} since ReLU is continuous but not differentiable, and so a network $\mathcal{N}$ with ReLU activation could only achieve

\begin{equation*}
    \sup_x |\partial_x^{(\alpha)}\mathcal{N}(x) - \partial_x^{(\alpha)}\phi(x)|<\epsilon     
\end{equation*}

for $\alpha=0$. By contrast, $\sin\in\mathcal{C}^\infty$, so the equivalent statement is true for any $|\alpha|<\infty$.

Evaluating the gradient of a SIREN scales quadratically in the number of layers of the SIREN \cite{SIREN}. So, fitting higher-order derivatives is no easy task. However, for simple differential equations like Poisson's equation, it is computationally feasible, and the authors of \cite{SIREN} provide experimental results that show SIRENs are excellent at modelling first and second-order derivatives of complicated signals, as well as the high-frequency signals themselves.

\subsection{Random Fourier features}

Recent works \cite{freqbias, specbias} have described a spectral bias inherent to neural networks learning functions. They prioritise learning the low-frequency modes of the functions and thus, high frequencies are captured much later in the training procedure. 

In many ways, this is a key reason behind the immense success of neural networks. Often, they are over-parameterised, i.e. the number of parameters far exceeds the number of training samples yet, counter-intuitively, they still show remarkable capacity to generalise well \cite{overparam}. Spectral bias may explain part of this phenomenon because it suggests, if there is a way to fit data effectively with only low frequencies, then a neural network will do just this, without needing to resort to high frequencies that overfit the data.

However, this also means that neural networks struggle to learn high frequency functions. Theoretical results in \cite{freqbiasconvrate} show that a one-dimensional function of pure frequency $\omega$, e.g. $\cos(\omega x)$, is learned in time that scales with $\omega^2$. This is ratified experimentally.

A 2020 paper \cite{FF} publishes results on the use of a Fourier feature mapping to effectively overcome this spectral bias, and allow multilayer perceptrons (MLPs) to learn high frequency functions in low-dimensional domains. The authors motivate such work with neural tangent kernel (NTK) theory. NTKs have been shown to model the behaviour of MLPs in the infinite-width limit during training \cite{NTK}. We do not describe them in detail here, but give a summary of the main idea behind Fourier feature mapping. For two different inputs $\mathbf{x},\mathbf{x'}$ to the MLP, the corresponding NTK can be given by

\newpage

\begin{equation*}
    NTK(\mathbf{x},\mathbf{x'})=h(\mathbf{x}^T\mathbf{x'})
\end{equation*}

where $h$ is some scalar function \cite{FF}.

The mapping

\begin{equation} \label{FFmap}
    \gamma(\mathbf{x}) = [\cos(2\pi\mathbf{Bx}), \sin(2\pi\mathbf{Bx})]^T
\end{equation}

is a Gaussian random Fourier feature mapping for $\mathbf{x}\in\mathbb{R}^d$, where each entry in $\mathbf{B}\in\mathbb{R}^{n\times d}$ is sampled from a normal distribution with mean zero and variance $\Sigma^2$. Therefore,

\begin{align*}
    NTK(\gamma(\mathbf{x}), \gamma(\mathbf{x'})) &= h(\gamma(\mathbf{x})^T\gamma(\mathbf{x'})) \\ &= h\left(\cos(2\pi\mathbf{Bx})\cos(2\pi\mathbf{Bx'})+\sin(2\pi\mathbf{Bx})\sin(2\pi\mathbf{Bx'})\right) \\
    &= h(\cos(2\pi\mathbf{B}(\mathbf{x}-\mathbf{x'})))
\end{align*}

Crucially, this defines a kernel function with width controlled by the random matrix $\mathbf{B}$. Kernel functions are used to fit data, and their width directly influences whether they overfit (with high frequencies) or underfit (with low frequencies). So, given that this function characterises the evolution of the MLP during training, we can tune the network towards learning particular frequencies by simply changing $\Sigma$:

\begin{itemize}
    \item A small $\Sigma$ gives a wide kernel that will underfit a high-frequency function.
    \item A large $\Sigma$ gives a narrow kernel that will overfit a low-frequency function.
\end{itemize}

In our framework, $\Sigma$ is now just another hyperparameter, and we can find the optimal $\Sigma$ through a simple sweep of values. We choose the value that gives the fastest convergence. The authors of \cite{FF} also advise that $n$, the number of Fourier features, improves performance with size. Of course, there is a computational cost associated with increasing $n$, so it is best taken `as small as gives good enough results.'

\subsection{Error correction}

We introduce the main work of this paper; the novel technique error correction \cite{AKSH1, AKSHetal, AKSH2, AKSH3, AKSH4} is designed to increase the efficacy of any neural network differential equation solver. This method is general and can be applied to all differential equations, in combination with any such similar strategies, such as Koopman boosting \cite{AKSHwtr} or those presented above. Much of the work here was proposed in \cite{AKSH1} and formalised in \cite{AKSHetal}, which the reader should refer to as supplement.

When dealing with neural networks, we bank on the idea that a `small enough' loss implies a `good enough' accuracy. Now, in many scenarios, this ideology fails because zero loss would represent drastic overfitting. Conveniently, this does not concern us as we want our network to fit the (training) data as accurately as possible. Still, the original problem remains; how can we know how close we are to the true solution $\phi$?

It turns out analysis and estimation of the unknown error between $\phi$ and $\mathcal{N}$ is possible. Indeed, in \cite{AKSH2}, the author shows how you can obtain specific bounds on this error, without knowledge of $\phi$. In this section, we provide a correction method (based on this error) to enhance neural network differential equation solvers, by overcoming performance saturation when the network settles around a local minimum of the loss function. Here, we also make use of differential equation operators which send true solutions to zero. Consider this for Poisson's equation (\ref{Poisson}):

\begin{equation*}
    \mathbf{F}[\cdot] = \nabla^2[\cdot] - f
\end{equation*}

\newpage

Define $\phi_\epsilon=\phi - \mathcal{N}$ as the error between the unknown solution $\phi$ and a fixed approximation $\mathcal{N}$. Clearly, 

\begin{align*}
    \mathbf{F}[\mathcal{N}]&=\nabla^2\mathcal{N} - f \\
    &= \nabla^2[\phi-\phi_\epsilon] - f \\
    &= \nabla^2\phi - f - \nabla^2\phi_\epsilon \\
    &= -\nabla^2\phi_\epsilon
\end{align*}

since $\mathbf{F}[\phi]=\nabla^2\phi-f=0$. Thus, $\mathbf{F}[\mathcal{N}]+\nabla^2\phi_\epsilon = 0$ and, given that $\mathbf{F}[\mathcal{N}]$ is completely independent to $\phi_\epsilon$, we have defined a new Poisson's equation. Our general strategy now will be to train a neural network $\mathcal{N}_\epsilon$ to approximate $\phi_\epsilon$ through the conditions of this new differential equation. Then, $\mathcal{N}+\mathcal{N_\epsilon}\approx\mathcal{N}+\phi_\epsilon=\phi$.

Before we formalise and evaluate this method, note that it applies also to differential equations with non-linear terms. Consider the Poisson-Boltzmann equation with Dirichlet boundary conditions:

\begin{equation*}
    \begin{cases}
        \nabla^2\phi + \sinh\phi & = f \text{ in $\Omega$} \\
        \phi & = g \text{ on $\partial\Omega$}
    \end{cases}
\end{equation*}

Define the operator

\begin{equation*}
    \mathbf{G}[\cdot] = \nabla^2[\cdot]+\sinh[\cdot]-f
\end{equation*}

and, once again, have $\phi_\epsilon=\phi - \mathcal{N}$. Then,

\begin{align*}
    \mathbf{G}[\mathcal{N}] &= \nabla^2\mathcal{N}+\sinh\mathcal{N}-f \\
    &= \nabla^2[\phi-\phi_\epsilon]+\sinh\mathcal{N} +\sinh\phi - \sinh\phi - f \\
    &= \nabla^2\phi + \sinh\phi - f -\nabla^2\phi_\epsilon + \sinh\mathcal{N} - \sinh\phi \\
    &= -\nabla^2\phi_\epsilon + \sinh\mathcal{N} - \sinh(\mathcal{N}+\phi_\epsilon)
\end{align*}

since $\mathbf{G}[\phi]=\nabla^2\phi + \sinh\phi - f=0$. A clever trick of adding and subtracting $\sinh\phi$ allows the $\mathbf{G}[\phi]$ term to be removed from the equation. In the last line, we simply seek to keep the equation explicit in $\mathcal{N}$ and $\phi_\epsilon$.

\subsubsection*{Theoretical results}

Now, we formalise this idea of error correction, adapting the approach from \cite{AKSHetal}. Consider a differential equation over $\Omega$ in operator form:

\begin{equation} \label{F0}
    \mathbf{F_0}[\phi] = \mathbf{A}[\phi] + \mathbf{B}[\phi] + \mathbf{C} = 0
\end{equation}

where $\mathbf{A}$ represents the terms that depend linearly on $\phi$, $\mathbf{B}$ represents those that depend non-linearly on $\phi$, and $\mathbf{C}$ is independent of $\phi$. The solution $\phi$ may also admit some constraints on the boundary $\partial\Omega$ but, for now, these are not of interest. Assume also that $\phi$ is unique.

We first prove a result that follows from the inverse function theorem \cite{IVT}:

\begin{theorem} \label{IVT}
    \textup{(Inverse function theorem).} Suppose that $F:\mathbb{R}^n\to\mathbb{R}^n$ is continuously differentiable in some open set containing $x^*$, and suppose moreover that the Jacobian $DF(x^*)$ is invertible. Then there exists open sets $U,V\subset\mathbb{R}^n$ with $x^*\in U$ and $F(x^*)\in V$ such that $F:U\to V$ is a bijection, and $F^{-1}:V\to U$ is continuously differentiable for all $y\in V$ with
    
    \[DF^{-1}(y)=\left[DF(F^{-1}(y))\right]^{-1}\]
    
\end{theorem}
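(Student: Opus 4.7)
The plan is to reduce local invertibility of $F$ to a fixed point problem and apply the Banach contraction mapping theorem. Writing $A := DF(x^*)$, which is invertible by hypothesis, the equation $F(x) = y$ is equivalent to finding a fixed point of
\begin{equation*}
T_y(x) := x + A^{-1}(y - F(x)),
\end{equation*}
since $T_y(x) = x$ if and only if $A^{-1}(y - F(x)) = 0$ if and only if $F(x) = y$. The first step is to differentiate: $DT_y(x) = I - A^{-1} DF(x)$, which vanishes at $x^*$. By continuity of $DF$, I can choose $r > 0$ so that $\|DT_y(x)\| \leq \tfrac{1}{2}$ on $\overline{B_r(x^*)}$, making each $T_y$ a contraction with Lipschitz constant $1/2$, uniformly in $y$.

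To produce the neighbourhoods, set $\delta = r/(2\|A^{-1}\|)$ and $V = B_\delta(F(x^*))$. For $y \in V$, checking that $T_y$ maps $\overline{B_r(x^*)}$ into itself reduces to bounding $\|T_y(x^*) - x^*\| = \|A^{-1}(y - F(x^*))\| \leq r/2$ and combining with the contraction estimate. Banach's theorem then yields a unique fixed point $x \in \overline{B_r(x^*)}$, and taking $U = F^{-1}(V) \cap B_r(x^*)$ gives the claimed bijection $F: U \to V$. The same contraction estimate yields the Lipschitz bound $\|x_1 - x_2\| \leq 2\|A^{-1}\|\,\|F(x_1) - F(x_2)\|$, so $F^{-1}$ is (Lipschitz) continuous on $V$.

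For the $C^1$ regularity, I would first shrink $U$ so that $DF(x)$ remains invertible for every $x \in U$, which is possible because $\det DF$ is continuous and nonzero at $x^*$. Fixing $y \in V$, setting $x = F^{-1}(y)$, and letting $h = F^{-1}(y+k) - F^{-1}(y)$, the algebraic identity
\begin{equation*}
F^{-1}(y+k) - F^{-1}(y) - [DF(x)]^{-1} k \;=\; -[DF(x)]^{-1} \bigl(F(x+h) - F(x) - DF(x) h\bigr)
\end{equation*}
combined with $\|h\| \leq C\|k\|$ and differentiability of $F$ at $x$ shows that $F^{-1}$ is differentiable at $y$ with $DF^{-1}(y) = [DF(F^{-1}(y))]^{-1}$. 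Continuity of this derivative then follows by composing the continuous maps $F^{-1}$, $DF$, and matrix inversion on $\mathrm{GL}_n(\mathbb{R})$.

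The main obstacle I anticipate is the bookkeeping of nested neighbourhoods: I need $T_y$ to both contract on $\overline{B_r(x^*)}$ and to map it into itself, I need the radius $\delta$ of $V$ to match $r$ so that the self-mapping property holds, and I need $DF$ to remain invertible throughout the final $U$ for the differentiability step. Each individual estimate is routine, but aligning the three shrinking arguments so that a single pair $(U, V)$ supports all conclusions simultaneously requires care.
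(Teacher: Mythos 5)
The paper does not prove this statement: Theorem \ref{IVT} is quoted as a classical result with a citation, and the text immediately moves on to derive Corollary \ref{cor1} from it. So there is no in-paper argument to compare against. Your proposal is the standard contraction-mapping proof of the inverse function theorem, and it is correct in all essentials: the reduction of $F(x)=y$ to a fixed point of $T_y(x) = x + A^{-1}(y - F(x))$, the uniform-in-$y$ contraction constant (which holds because $DT_y$ does not depend on $y$), the self-mapping estimate tying $\delta$ to $r$ via $\|A^{-1}\|$, the Lipschitz bound $\|x_1 - x_2\| \leq 2\|A^{-1}\|\,\|F(x_1)-F(x_2)\|$ obtained by cancelling the $y$-terms, and the difference-quotient identity for differentiability of $F^{-1}$. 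Your own flagged concern about aligning the nested neighbourhoods is the right one, and your choices already resolve it: since $y \in V$ gives the strict inequality $\|T_y(x^*) - x^*\| < r/2$, the fixed point lands in the open ball, so $U = F^{-1}(V) \cap B_r(x^*)$ is open and $F: U \to V$ is a genuine bijection. Two small points to make explicit in a full write-up: the contraction estimate on $\overline{B_r(x^*)}$ requires the mean value inequality on a convex set (the ball is convex, so this is fine), and the final shrinking of $U$ to keep $DF(x)$ invertible should be accompanied by replacing $V$ with $F(U)$, which remains open by the already-established local homeomorphism.
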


\begin{corollary} \label{cor1}
    Suppose that $\mathbf{F_0}:\mathbb{R}\to\mathbb{R}$ in (\ref{F0}) is continuously differentiable in some open set containing $\phi^*$, that $D\mathbf{F_0}[\phi^*]$ is invertible, and $\mathbf{F_0}[\phi^*]=0$. Then, there is a neighbourhood of $0$ small enough such that
    
    \[\mathbf{F_0}[\mathcal{N}]\to 0\implies \mathcal{N}\to\phi^*\]
    
\end{corollary}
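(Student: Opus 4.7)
The plan is to apply Theorem \ref{IVT} essentially off the shelf and then just chase continuity. Since $\mathbf{F_0}$ is continuously differentiable in an open set about $\phi^*$ and $D\mathbf{F_0}[\phi^*]$ is invertible, the inverse function theorem hands us open sets $U \ni \phi^*$ and $V \ni \mathbf{F_0}[\phi^*] = 0$ on which $\mathbf{F_0}: U \to V$ is a bijection with a continuously differentiable inverse $\mathbf{F_0}^{-1}: V \to U$. That is the entire structural content; the corollary is then a statement about how the implication $\mathbf{F_0}[\mathcal{N}]\to 0 \implies \mathcal{N}\to\phi^*$ is simply the continuity of $\mathbf{F_0}^{-1}$ at the point $0$.

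The key step is then to observe that, as soon as the inputs $\mathbf{F_0}[\mathcal{N}]$ lie inside $V$, one can write $\mathcal{N} = \mathbf{F_0}^{-1}(\mathbf{F_0}[\mathcal{N}])$. Taking $\mathbf{F_0}[\mathcal{N}] \to 0$ in $V$ and using the continuity of $\mathbf{F_0}^{-1}$ at $0$, together with $\mathbf{F_0}^{-1}(0) = \phi^*$ (which follows from $\mathbf{F_0}[\phi^*] = 0$ plus injectivity of $\mathbf{F_0}$ on $U$), yields $\mathcal{N} \to \phi^*$. The phrase \emph{neighbourhood of $0$ small enough} in the statement is just $V$: the implication only needs to hold once one is close enough to $0$ that the preimage stays in the invertibility window $U$.

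If I wanted to present it carefully, I would phrase the conclusion in either of two equivalent ways. First, as a sequential statement: for any sequence $\{\mathcal{N}_k\}$ with $\mathbf{F_0}[\mathcal{N}_k] \in V$ for all $k$ large and $\mathbf{F_0}[\mathcal{N}_k] \to 0$, continuity of $\mathbf{F_0}^{-1}$ gives $\mathcal{N}_k = \mathbf{F_0}^{-1}(\mathbf{F_0}[\mathcal{N}_k]) \to \phi^*$. Second, as a quantitative $\epsilon$–$\delta$ statement: for every $\epsilon>0$ small enough that the $\epsilon$-ball about $\phi^*$ lies in $U$, continuity of $\mathbf{F_0}^{-1}$ at $0$ yields $\delta>0$ with $|\mathbf{F_0}[\mathcal{N}]| < \delta \implies |\mathcal{N} - \phi^*| < \epsilon$. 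Both versions are essentially the same argument and amount to no more than unpacking Theorem \ref{IVT}.

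Honestly there is no real obstacle here: the hypotheses of the corollary are exactly the hypotheses of the inverse function theorem, and the conclusion is a one-line consequence. The only mild point to flag is that the result is local — the neighbourhood of $0$ cannot be taken arbitrarily large, since $\mathbf{F_0}$ need not be globally injective, and there may be spurious roots of $\mathbf{F_0}[\mathcal{N}] = 0$ far from $\phi^*$. This is worth mentioning explicitly because it foreshadows the very issue (non-convex loss landscapes, spurious local minima) that motivates the error correction construction in the first place.
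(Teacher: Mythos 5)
Your proposal is correct and follows exactly the same route as the paper's own proof: invoke Theorem \ref{IVT} to obtain the neighbourhoods $U \ni \phi^*$ and $V \ni 0$ with $\mathbf{F_0}:U\to V$ a bijection, then conclude via continuity of $\mathbf{F_0}^{-1}$ at $0$. If anything, your write-up is more careful than the paper's one-line argument, since you make explicit the identity $\mathcal{N}=\mathbf{F_0}^{-1}(\mathbf{F_0}[\mathcal{N}])$ and the role of $V$ as the ``small enough'' neighbourhood.
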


\begin{proof}
    By Theorem \ref{IVT}, choose neighbourhoods $U,V\subset\mathbb{R}$ with $\phi^*\in U,0\in V$ such that $\mathbf{F_0}:U\to V$ is a bijection and $\mathbf{F_0}^{-1}:V\to U$ is continuous differentiable for all $y\in V$. For $\mathcal{N}\in U$, the continuity of $\mathbf{F_0}^{-1}$ implies that
    
    \[\mathbf{F_0}[\mathcal{N}]\to 0\implies \mathcal{N}\to\phi^*\]
    
\end{proof}

Thus, assuming we can minimise the loss function for some neural network $\mathcal{N}$ such that $\mathbf{F_0}[\mathcal{N}]\to 0$ at all points, then $\mathcal{N}\to\phi$ at all points. So, let us train such a network $\mathcal{N}_0$ to approximate $\phi$ via (\ref{F0}). Define also $\phi_1 = \phi - \mathcal{N}_0$.

\begin{align*}
    \mathbf{F_0}[\mathcal{N}_0] &= \mathbf{A}[\mathcal{N}_0] + \mathbf{B}[\mathcal{N}_0] + \mathbf{C} \\
    &= \mathbf{A}[\phi-\phi_1] + \mathbf{B}[\mathcal{N}_0] + \mathbf{B}[\phi] - \mathbf{B}[\phi] + \mathbf{C}  \\
    &= \mathbf{A}[\phi] + \mathbf{B}[\phi] + \mathbf{C} - \mathbf{A}[\phi_1] + \mathbf{B}[\mathcal{N}_0] - \mathbf{B}[\phi] \\
    &= - \mathbf{A}[\phi_1] + \mathbf{B}[\mathcal{N}_0] - \mathbf{B}[\mathcal{N}_0+\phi_1]
\end{align*}

since $\mathbf{F_0}[\phi]=\mathbf{A}[\phi] + \mathbf{B}[\phi] + \mathbf{C}=0$ by definition. We have defined a new differential equation in operator form:

\begin{equation*}
    \mathbf{F_1}[\phi_1] = \mathbf{F_0}[\mathcal{N}_0] + \mathbf{A}[\phi_1] - \mathbf{B}[\mathcal{N}_0] + \mathbf{B}[\mathcal{N}_0+\phi_1] = 0
\end{equation*}

$\phi_1$ solves the above equation exactly and, given the uniqueness of $\phi$, is also unique. Now, train some other neural network $\mathcal{N}_1$ to approximate $\phi_1$, and define $\phi_2 = \phi_1-\mathcal{N}_1$. Once again,

\begin{align*}
    \mathbf{F_1}[\mathcal{N}_1] &= \mathbf{F_0}[\mathcal{N}_0] + \mathbf{A}[\mathcal{N}_1] - \mathbf{B}[\mathcal{N}_0] + \mathbf{B}[\mathcal{N}_0+\mathcal{N}_1] \\
    &= \mathbf{F_0}[\mathcal{N}_0] + \mathbf{A}[\phi_1-\phi_2]-\mathbf{B}[\mathcal{N}_0]+\mathbf{B}[\mathcal{N}_0+\mathcal{N}_1]+\mathbf{B}[\mathcal{N}_0+\phi_1]-\mathbf{B}[\mathcal{N}_0+\phi_1] \\
    &= \mathbf{F_0}[\mathcal{N}_0] + \mathbf{A}[\phi_1] - \mathbf{B}[\mathcal{N}_0] + \mathbf{B}[\mathcal{N}_0+\phi_1] - \mathbf{A}[\phi_2] + \mathbf{B}[\mathcal{N}_0+\mathcal{N}_1] - \mathbf{B}[\mathcal{N}+\phi_1] \\
    &= - \mathbf{A}[\phi_2] + \mathbf{B}[\mathcal{N}_0+\mathcal{N}_1] - \mathbf{B}[\mathcal{N}_0+\phi_1] \\
    &= - \mathbf{A}[\phi_2] + \mathbf{B}[\mathcal{N}_0+\mathcal{N}_1] - \mathbf{B}[\mathcal{N}_0+\mathcal{N}_1 + \phi_2]
\end{align*}

since $\mathbf{F_1}[\phi_1]=\mathbf{F_0}[\mathcal{N}_0] + \mathbf{A}[\phi_1] - \mathbf{B}[\mathcal{N}_0] + \mathbf{B}[\mathcal{N}_0+\phi_1]=0$, and $\phi_1=\mathcal{N}_1 + \phi_2$. We define a further differential equation in operator form:

\begin{equation*}
    \mathbf{F_2}[\phi_2] = \mathbf{F_1}[\mathcal{N}_1] + \mathbf{A}[\phi_2] - \mathbf{B}[\mathcal{N}_0+\mathcal{N}_1] + \mathbf{B}[\mathcal{N}_0+\mathcal{N}_1 + \phi_2] = 0
\end{equation*}

Now, repeat the process. This algorithm can continue indefinitely, and we summarise the steps below. The idea is that our error-corrected approximation $\mathcal{N}_0+\mathcal{N}_1+\mathcal{N}_2+...$ will be more accurate than the once-trained approximation $\mathcal{N}_0$. This strategy is not unseen in the field of numerical methods to differential equations, we just apply it here to neural network solvers.

Let us define a recursive differential equation for the $k\textsuperscript{th}$ error correction. At this point, we have trained the initial network $\mathcal{N}_0$, and also a further $k-1$ residual networks $\mathcal{N}_1,\mathcal{N}_2,...,\mathcal{N}_{k-1}$. Our current error-corrected approximation is $\mathcal{N}^{(k-1)}=\mathcal{N}_0+\mathcal{N}_1+\mathcal{N}_2+...+\mathcal{N}_{k-1}$. Define $\phi_k=\phi_{k-1}-\mathcal{N}_{k-1}$. Now, train a new network $\mathcal{N}_k$ to approximate $\phi_k$ through the following differential equation:

\begin{equation} \label{Fk}
    \mathbf{F_k}[\phi_k] = \mathbf{F_{k-1}}[\mathcal{N}_{k-1}] + \mathbf{A}[\phi_k] - \mathbf{B}[\mathcal{N}^{(k-1)}] + \mathbf{B}[\mathcal{N}^{(k-1)} + \phi_k] = 0
\end{equation}

\begin{remark}
    $\mathbf{F_k}[\mathcal{N}_k]\equiv\mathbf{F_0}[\mathcal{N}^{(k)}]$.   
\end{remark}

\begin{corollary} \label{cor2}
    Suppose that $\mathbf{F_k}:\mathbb{R}\to\mathbb{R}$ in (\ref{Fk}) is continuously differentiable in some open set containing $\phi_k^*$, that $D\mathbf{F_k}[\phi_k^*]$ is invertible, and $\mathbf{F_k}[\phi_k^*]=0$. Then, there is a neighbourhood of $0$ small enough such that
    
    \[\mathbf{F_k}[\mathcal{N}_k]\to 0\implies \mathcal{N}_k\to\phi_k^*\]
    
    Furthermore,
    
    \[|\mathcal{N}^{(k)}-\phi|=\mathcal{O}\left(|\mathbf{F_k}[\mathcal{N}_k]|\right)\]
    
\end{corollary}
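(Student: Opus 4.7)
The plan is to prove Corollary \ref{cor2} in two stages that parallel, and then build on, the proof of Corollary \ref{cor1}. The first implication $\mathbf{F_k}[\mathcal{N}_k] \to 0 \implies \mathcal{N}_k \to \phi_k^*$ is essentially a verbatim repetition of the Corollary \ref{cor1} argument with $\mathbf{F_0},\phi^*$ replaced by $\mathbf{F_k},\phi_k^*$: apply Theorem \ref{IVT} to produce open neighbourhoods $U_k \ni \phi_k^*$ and $V_k \ni 0$ on which $\mathbf{F_k}:U_k \to V_k$ is a bijection with continuously differentiable (hence locally Lipschitz) inverse, and then read the implication off continuity of $\mathbf{F_k}^{-1}$ at $0$. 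By the uniqueness of $\phi$ propagated recursively through the definition $\phi_k = \phi_{k-1} - \mathcal{N}_{k-1}$, the zero $\phi_k^*$ coincides with the target residual $\phi_k$.

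For the order bound, the essential lever is the remark $\mathbf{F_k}[\mathcal{N}_k] \equiv \mathbf{F_0}[\mathcal{N}^{(k)}]$, which collapses the quality of the error-corrected approximation into the residual loss of only the latest network. I would first verify this remark by induction on $k$: the base case $k=0$ is trivial, and the inductive step follows by unpacking the definition (\ref{Fk}) and using $\mathcal{N}^{(k)} = \mathcal{N}^{(k-1)} + \mathcal{N}_k$ together with the inductive hypothesis to cancel the $\mathbf{F_{k-1}}[\mathcal{N}_{k-1}]$ term against the linear and nonlinear contributions.

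Given the remark, apply Theorem \ref{IVT} to $\mathbf{F_0}$ at $\phi$: on a neighbourhood $V_0$ of $0$, the inverse $\mathbf{F_0}^{-1}$ exists and is $C^1$, so $\|D\mathbf{F_0}^{-1}(y)\| = \|[D\mathbf{F_0}(\mathbf{F_0}^{-1}(y))]^{-1}\|$ is bounded by some constant $C$ on a (possibly smaller) neighbourhood of $0$. Assuming $\mathcal{N}^{(k)}$ lies in the corresponding preimage neighbourhood of $\phi$, the mean value inequality applied to $\mathbf{F_0}^{-1}$ between $\mathbf{F_0}[\mathcal{N}^{(k)}]$ and $\mathbf{F_0}[\phi]=0$ yields
\[
|\mathcal{N}^{(k)} - \phi| \;=\; \left|\mathbf{F_0}^{-1}(\mathbf{F_0}[\mathcal{N}^{(k)}]) - \mathbf{F_0}^{-1}(0)\right| \;\leq\; C\,|\mathbf{F_0}[\mathcal{N}^{(k)}]| \;=\; C\,|\mathbf{F_k}[\mathcal{N}_k]|,
\]
which is precisely the claimed $\mathcal{O}(|\mathbf{F_k}[\mathcal{N}_k]|)$ bound.

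The main obstacle is honest bookkeeping of the locality assumptions: Theorem \ref{IVT} only guarantees a neighbourhood, so one must argue that the cumulative approximation $\mathcal{N}^{(k)}$ actually stays inside the preimage neighbourhood of $\phi$ where $\mathbf{F_0}^{-1}$ is controlled. I would handle this by hypothesising (consistent with the corollary's phrasing ``there is a neighbourhood of $0$ small enough'') that each $\mathcal{N}_k$ is trained well enough that $\mathbf{F_k}[\mathcal{N}_k]$ lives in $V_0$; equivalently, an inductive argument shows the error correction does not push $\mathcal{N}^{(k)}$ outside the domain of validity of the local inverse. The remaining verification of the remark by induction is algebraically routine but needs care to match the linear and nonlinear operator pieces in (\ref{Fk}).
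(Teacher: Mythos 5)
Your proposal follows essentially the same route as the paper: the first implication is the Corollary \ref{cor1} argument applied to $\mathbf{F_k}$, and the order bound comes from the local Lipschitz continuity of $\mathbf{F_0}^{-1}$ near $0$ (your mean value inequality with bounded $D\mathbf{F_0}^{-1}$ is the same estimate) combined with the remark $\mathbf{F_k}[\mathcal{N}_k]\equiv\mathbf{F_0}[\mathcal{N}^{(k)}]$. If anything you are more careful than the paper, which asserts the remark without proof and does not address whether $\mathcal{N}^{(k)}$ stays inside the neighbourhood where the local inverse is controlled.
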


\begin{proof}
    The first result follows analogously from the inverse function theorem as in Corollary \ref{cor1}.
    
    By Theorem \ref{IVT}, $\mathbf{F_0}^{-1}$ is continuously differentiable on some open set around $0$. Thus, it is also locally Lipschitz continuous around $0$, meaning there exists some constant $\alpha\geq 0$ such that
    
    \begin{align*}
        |\mathcal{N}^{(k)}-\phi| &= |\mathbf{F_0}^{-1}[\mathbf{F_0}[\mathcal{N}^{(k)}] - \mathbf{F_0}^{-1}[\mathbf{F_0}[\phi]]| \\
        &\leq \alpha|\mathbf{F_0}[\mathcal{N}^{(k)}]-\mathbf{F_0}[\phi]| \\
        &\leq \alpha|\mathbf{F_0}[\mathcal{N}^{(k)}]| \\
        &\leq \alpha|\mathbf{F_k}[\mathcal{N}_k]|
    \end{align*}
    
    and therefore,
    
    \[|\mathcal{N}^{(k)}-\phi|=\mathcal{O}\left(|\mathbf{F_k}[\mathcal{N}_k]|\right)\]
    
\end{proof}

Finally, given Dirichlet boundary conditions $\phi=g$ on $\partial\Omega$, any $\phi_k$ is known exactly over $\partial\Omega$ since $\phi_k=\phi-\mathcal{N}^{(k-1)}$. Thus, the loss function for the $k\textsuperscript{th}$ error correction can be defined as

\begin{equation} \label{Lk}
    \mathcal{L}_k(\theta^{(k)})=\frac{1}{M}\sum_{i=1}^M \left(\mathbf{F_k}\left[\mathcal{N}_k\left(\mathbf{x}_i;\theta^{(k)}\right)\right]\right)^2 + \frac{1}{N}\sum_{j=1}^N \left(\mathcal{N}_k\left(\mathbf{y}_j;\theta^{(k)}\right)-\phi_k(\mathbf{y}_j)\right)^2
\end{equation}

for some randomly sampled points $\{\mathbf{x}_i\}_{i=1}^M$ from $\Omega$ and $\{\mathbf{y}_j\}_{j=1}^N$ from $\partial\Omega$. 

\subsubsection*{Algorithm}

The error correction algorithm to order $K$ proceeds as follows:

\begin{enumerate}
    \item Train a neural network $\mathcal{N}_0$ to satisfy the conditions of a differential equation given by (\ref{F0}) and constraint conditions. Once the loss has converged, stop training and freeze the parameters of $\mathcal{N}_0$.
    \item Initiate and train new neural networks $\{\mathcal{N}_k\}_{k=1}^K$ in sequence to satisfy differential equations given by (\ref{Fk}), via loss functions (\ref{Lk}). Once the loss has converged, stop training, freeze the parameters of $\mathcal{N}_k$, and proceed with $\mathcal{N}_{k+1}$. 
    \item The solution to (\ref{F0}) is approximated by $\mathcal{N}:=\mathcal{N}^{(K)}=\sum\limits_{k=0}^K \mathcal{N}_k$.
\end{enumerate}

This is given above for Dirichlet boundary conditions, but works generally if you incorporate the constraint conditions into all loss functions.

\subsubsection*{Poisson's equation}

For Poisson's equation (\ref{Poisson}), $\mathbf{B}\equiv 0$ since the Laplacian is linear, so we can write the $k\textsuperscript{th}$ differential equation as

\begin{equation*}
    \mathbf{F_k}[\phi_k] = \mathbf{F_{k-1}}[\mathcal{N}_{k-1}] + \nabla^2[\phi_k] = 0    
\end{equation*}

which is a Poisson's equation with our usual $f=-\mathbf{F_{k-1}}[\mathcal{N}_{k-1}]$. Thus, we can apply Theorem \ref{bigboy} to guarantee that there are neural networks out there that can get very, very close to the $\phi_k$. In the next section, we provide evidence that shows, if we can train just two or three of these networks to reasonably approximate their true solutions, our error-corrected approximation will be a more accurate numerical solution to the original differential equation.

\section{Results} \label{Results}

We present results for a variety of different Poisson's equations (\ref{Poisson}). Our choice of Poisson's equation is motivated by its immense application in many areas of theoretical physics, including electrostatics and fluid dynamics. It is also the simplest second-order, linear PDE, making for a concise yet insightful demonstration of the power of error correction in neural network differential equation solvers.

To achieve this, we choose the function $f$ on the RHS to force a particular solution $\phi$ that we want to capture. For example, $f(x)=1$ would force the solution $\phi(x)=\frac{1}{2}x^2+c_1x+c_0$. However, in general, $f$ can be anything, particularly something which does not admit a closed-form solution to (\ref{Poisson}), and we do this for ease of visualising $\phi$.

Knowing the ground truth solution $\phi$ in closed form also allows us to compute the relative error 

\begin{equation*}
    \frac{\sum\limits_{\mathbf{x}\in S} \left(\phi(\mathbf{x})-\mathcal{N}(\mathbf{x})\right)^2}{\sum\limits_{\mathbf{x}\in S} \phi(\mathbf{x})^2}
\end{equation*}

at each epoch (iteration) of the training procedure, so we have an understanding of the success of our solver. It is important to note that, while we know $\phi$ and the relative error associated with our approximation, the neural network does not, and is solely trained via the loss function.

All neural networks used are SIRENs with 5 hidden layers and 128 hidden units per layer. They are trained on batches of 256, using the stochastic gradient descent variant Adam \cite{ADAM}, and learning rates are manually tuned for each case of Poisson's equation. All experiments are run on a 1.8 GHz Dual-Core Intel Core i5 CPU.

\newpage

\subsection{3D Poisson's Equation}

\begin{figure}[h]
\centering
\includegraphics[width=0.5\textwidth, trim = {24.5cm 0 0 0}, clip]{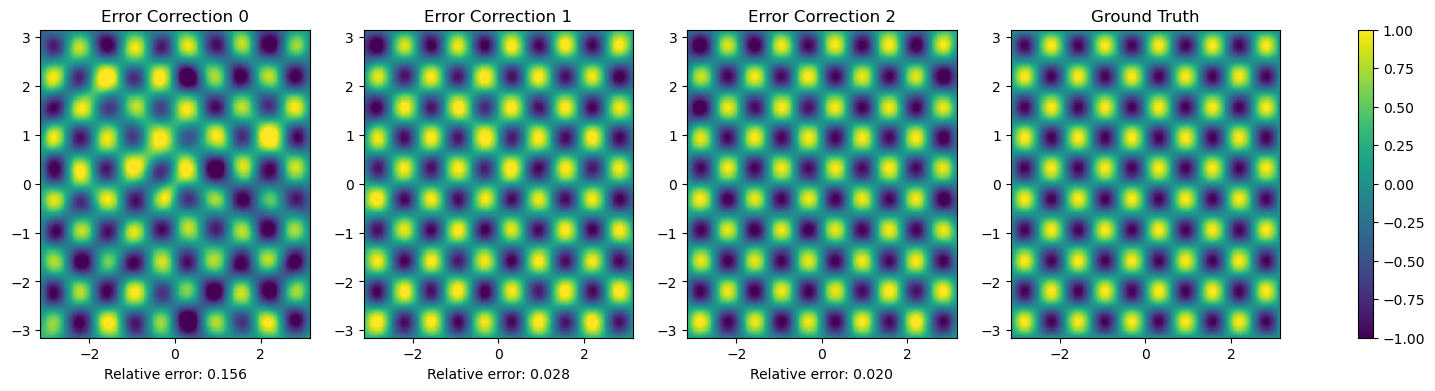}
\caption{$\phi(x,y,z)=\sin(5x)\sin(5y)\sin(5z)$ at $z=\frac{\pi}{10}$} \label{phi1}
\end{figure}

Figure \ref{phi1} shows the solution to Poisson's equation:

\begin{equation} \label{p1}
    \begin{cases}
        \nabla^2\phi & = -75\sin(5x)\sin(5y)\sin(5z) \text{ in $\Omega=[-\pi,\pi]\times[-\pi,\pi]\times[-\pi,\pi]$} \\
        \phi & =  0\text{ on $\partial\Omega$}
    \end{cases}
\end{equation}

Figure \ref{N1} shows our numerical solutions, with $\mathcal{N}^{(0)}=\mathcal{N}_0$ on the left, $\mathcal{N}^{(1)}=\mathcal{N}_0+\mathcal{N}_1$ in the centre, and $\mathcal{N}^{(2)}=\mathcal{N}_0+\mathcal{N}_1+\mathcal{N}_2$ on the right. We refer to these as Error Correction 0, 1 and 2, respectively.

Visually, all error corrections seem to capture the solution well. Furthermore, each correction decreases the relative error (printed at the bottom of of Figure \ref{N1}). Error Correction 1 does so significantly, while the improvement in accuracy from Error Correction 2 is marginal.

This is further captured in Figure \ref{N1m}, which is a plot of the loss and relative error per epoch. After finding a local minimum in Error Correction 0, the loss fluctuates erratically until we initialise Error Correction 1. The improvement is truly appreciable, and felt across the trends in relative error too.

\newpage

\begin{figure}[h!]
\centering
\includegraphics[width=\textwidth, trim = {0 0 11.5cm 0}, clip]{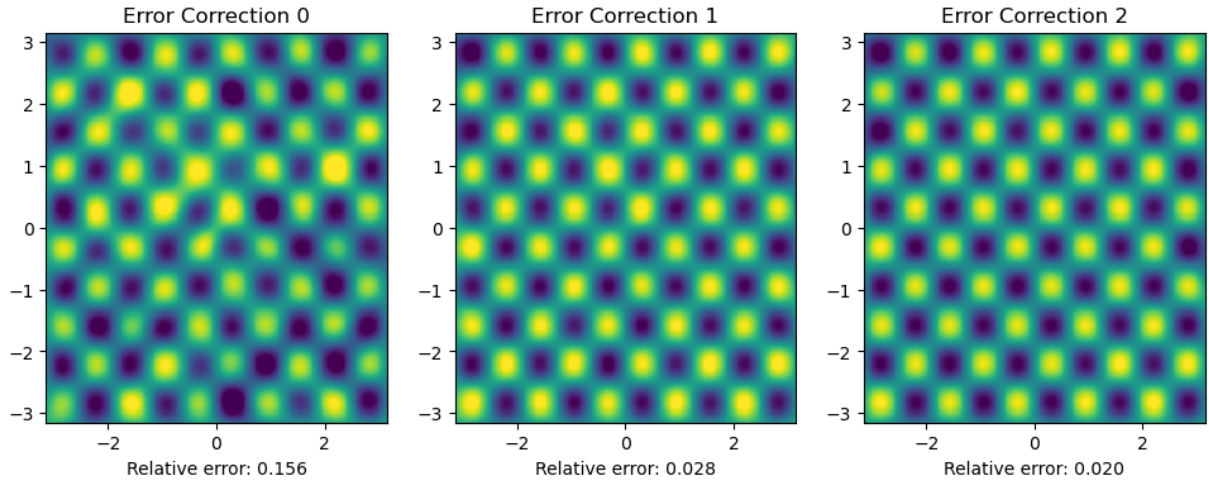}
\caption{Numerical solutions $\mathcal{N}^{(0)}, \mathcal{N}^{(1)}$ and $\mathcal{N}^{(2)}$ to (\ref{p1})} \label{N1}
\end{figure}

\vspace{0.5in}

\begin{figure}[h!]
\centering
\includegraphics[width=\textwidth]{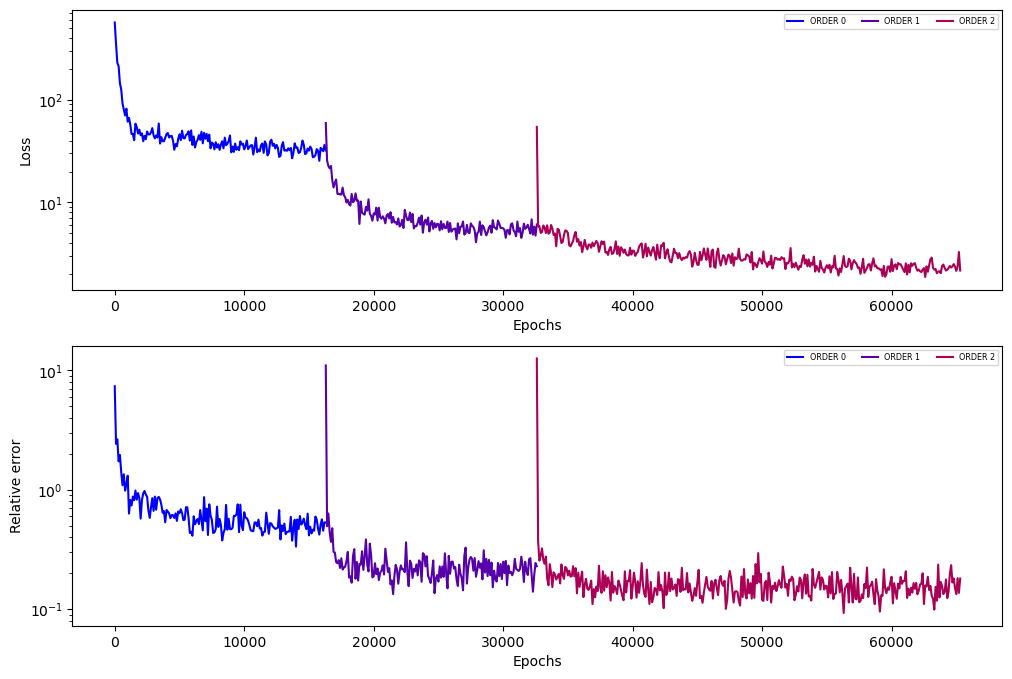}
\caption{Per-epoch loss and relative errors for numerical solutions to (\ref{p1})} \label{N1m}
\end{figure}

\newpage

\subsection{2D Poisson's Equation}

\subsubsection*{(i)}

\begin{figure}[h]
\centering
\includegraphics[width=0.5\textwidth, trim = {24.5cm 0 0 0}, clip]{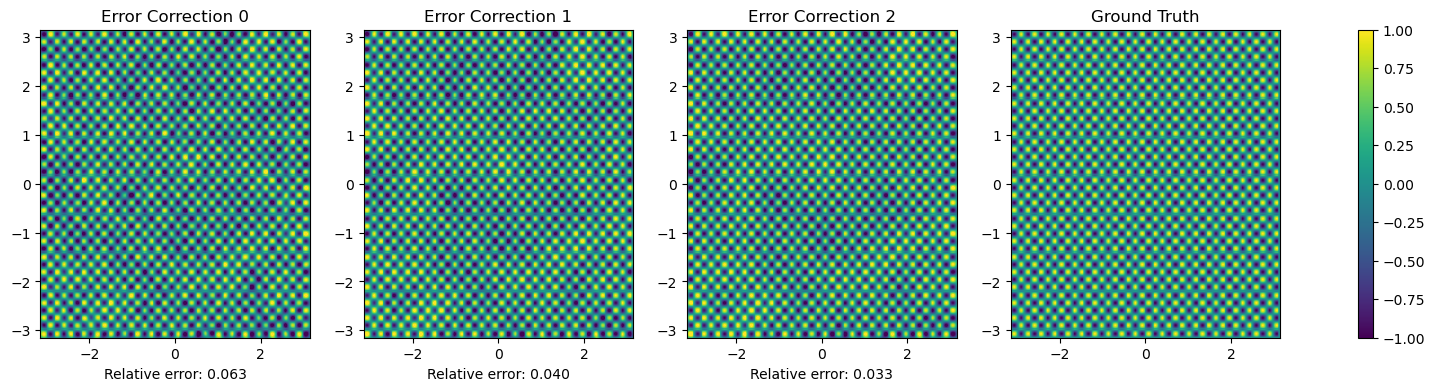}
\caption{$\phi(x,y)=\sin(20x)\sin(20y)$} \label{phi2}
\end{figure}

Figure \ref{phi2} shows the solution to Poisson's equation:

\begin{equation} \label{p2}
    \begin{cases}
        \nabla^2\phi & = -800\sin(5x)\sin(5y) \text{ in $\Omega=[-\pi,\pi]\times[-\pi,\pi]$} \\
        \phi & =  0\text{ on $\partial\Omega$}
    \end{cases}
\end{equation}

Due to the highly oscillatory nature of the solution, a neural network will struggle to accurately capture its structure. This is demonstrated in Figure \ref{toohard}, where the approximation cannot account for so many peaks and troughs in the solution.

\begin{figure}[h]
\centering
\includegraphics[width=0.4\textwidth]{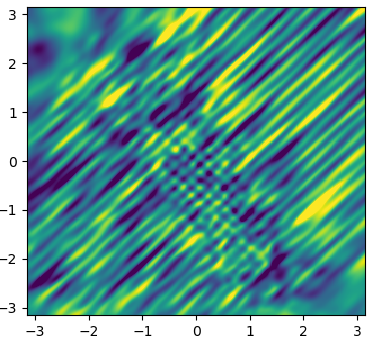}
\caption{Naive attempt at a numerical solution to (\ref{p2})} \label{toohard}
\end{figure}

To obtain a realistic solution, we apply a Gaussian random Fourier feature mapping to the input, before passing it through the network. After a simple sweep of values, we take $\Sigma = 1$ and $n=256$, as defined in (\ref{FFmap}). Figures \ref{N2} and \ref{N2m} show similar trends to those in the previous experiment. 

\newpage

\begin{figure}[h!]
\centering
\includegraphics[width=\textwidth, trim = {0 0 11.5cm 0}, clip]{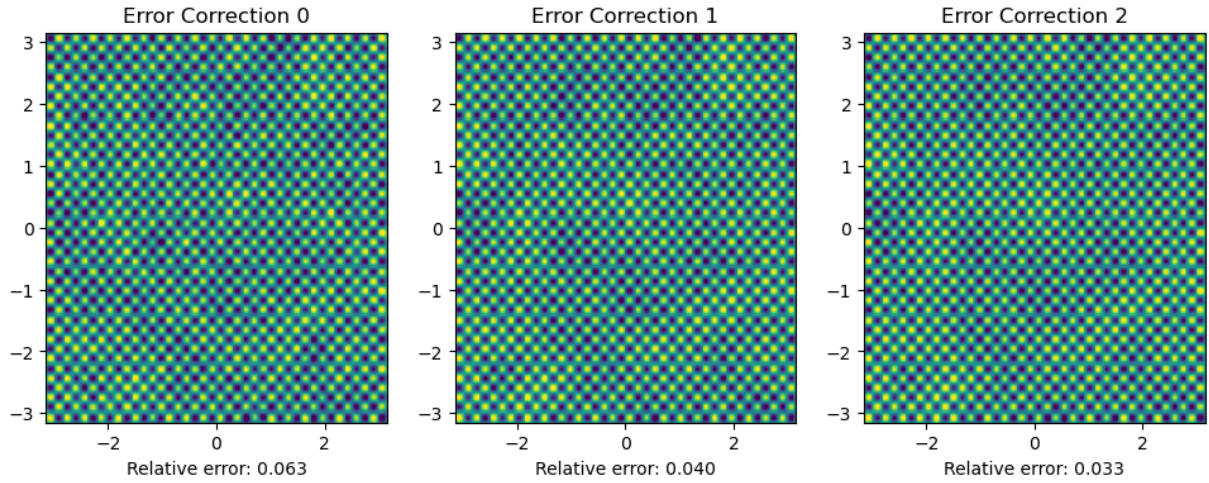}
\caption{Numerical solutions $\mathcal{N}^{(0)}, \mathcal{N}^{(1)}$ and $\mathcal{N}^{(2)}$ to (\ref{p2}), trained using random Fourier features with $\Sigma=1$ and $n=256$} \label{N2}
\end{figure}

\vspace{0.5in}

\begin{figure}[h!]
\centering
\includegraphics[width=\textwidth]{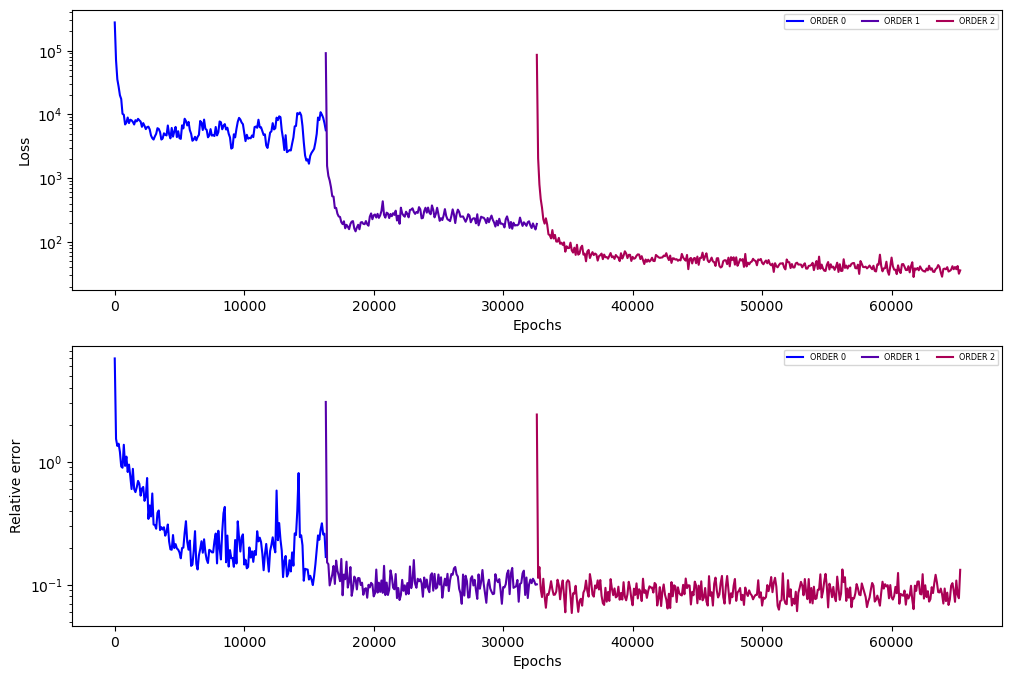}
\caption{Per-epoch loss and relative errors for numerical solutions to (\ref{p2})} \label{N2m}
\end{figure}

\newpage

\subsubsection*{(ii)}

\begin{figure}[h]
\centering
\includegraphics[width=0.5\textwidth]{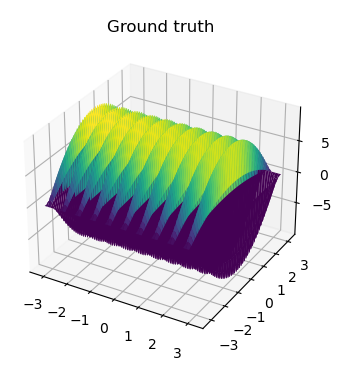}
\caption{$\phi(x,y)=(\pi^2-y^2)\sin(10x)$} \label{phi3}
\end{figure}

Figure \ref{phi3} shows the solution to Poisson's equation:

\begin{equation} \label{p3}
    \begin{cases}
        \nabla^2\phi & = (100y^2-100\pi^2-2)\sin(10x) \text{ in $\Omega=[-\pi,\pi]\times[-\pi,\pi]$} \\
        \phi & =  0\text{ on $\partial\Omega$}
    \end{cases}
\end{equation}

In Figure \ref{N3a}, we train a neural network $\mathcal{N}^{(0)}$ to approximate the solution to (\ref{p3}) for $2^{11}$ epochs, but we save its parameter states after $2^{10}$ epochs. These define a new network which we call $\mathcal{N}_0$. The fully-trained $\mathcal{N}^{(0)}$ achieves a reasonable relative error. Roughness is clearly visible in the plot.

In Figure \ref{N3b}, we plot the half-trained $\mathcal{N}_0$ on the left. As expected, it has not yet reached the accuracy of $\mathcal{N}^{(0)}$. However, we also initiate an error correction $\mathcal{N}_1$, of $\mathcal{N}_0$, that trains for another $2^{10}$ epochs. Thus, we produce an approximation $\mathcal{N}^{(1)}=\mathcal{N}_0+\mathcal{N}_1$ that has also trained for a total of $2^{11}$ epochs. This is significantly more accurate than $\mathcal{N}^{(0)}$, and the plot is visibly smoother. Figures \ref{N3a} and \ref{N3b} provide a clear exemplification of the immediate fruitfulness of a single error correction.

\newpage

\begin{figure}[h!]
\centering
\includegraphics[width=0.5\textwidth]{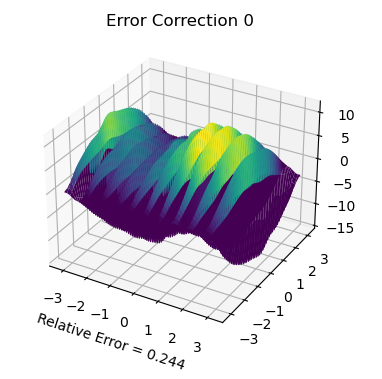}
\caption{Numerical solution $\mathcal{N}^{(0)}$ to (\ref{p3}), trained for $2^{11}$ epochs} \label{N3a}
\end{figure}

\vspace{0.5in}

\begin{figure}[h!]
\centering
\includegraphics[width=\textwidth]{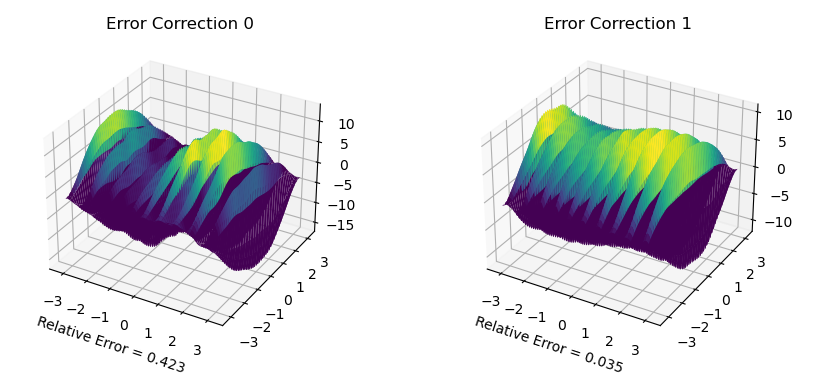}
\caption{Numerical solutions $\mathcal{N}_0$ and $\mathcal{N}^{(1)}$ to (\ref{p3}), trained for a total of $2^{11}$ epochs} \label{N3b}
\end{figure}

\newpage

\section{Discussion}

Our results do not endorse error correction as a tool to marginally reduce error across tens of corrections. Instead, they suggest training a network for half the allotted time, and devoting the other half to a single error correction. This can yield significantly more accurate results.

Error correction is not without cost however. In our implementation, we train correction networks on newly sampled points. This means that to obtain $\mathbf{F_k}[\mathcal{N}_k]$, we must first make $k$ forward passes of the new data through $\mathcal{N}_0,\mathcal{N}_1,...,\mathcal{N}_{k-1}$ and differentiate these to compute $\mathbf{F_{k-1}}[\mathcal{N}_{k-1}]$. The time complexity of producing a $k\textsuperscript{th}$ order approximation $\mathcal{N}^{(k)}$, assuming the number of epochs $E$ and batch size $B$ per correction, and optimisation costs, are kept constant across all corrections, is $\mathcal{O}\left(EB(k+1)^2\right)$. If we instead pass identical batches through each correction network, storing the $\mathbf{F_{k-1}}[\mathcal{N}_{k-1}]$ in memory, we can have a time complexity of $\mathcal{O}(EB(k+1))$, however the space complexity would be substantially increased.

\section{Further Work}

Over time, this study of neural network differential equation solvers naturally lent itself to hot-off-the-press topics in machine learning like sinusoidal representation networks \cite{SIREN} and random Fourier features \cite{FF}, for the simple reason that such concepts are inextricably linked through their applications. Outside of differential equations, neural networks as continuous parameterisations of discrete signals have immense potential in 3D shape representation, but also in image, video and audio representation and reconstruction. These problems may utilise neural networks as function approximators or, as we did, derivative approximators. There is no reason to suggest why the ideas of error correction cannot be employed here, and every reason to further explore the interplay of these techniques when applied to problems in computer vision.

\bibliographystyle{unsrt}
\bibliography{references}

\end{document}